\theoremstyle{plain}
\theoremstyle{definition}
\newtheorem{theorem}{Theorem}
\newtheorem{lemma}[theorem]{Lemma}
\newtheorem{corollary}[theorem]{Corollary}
\newtheorem{example}[theorem]{Example}
\newtheorem{remark}[theorem]{Remark}
\theoremstyle{remark}
\numberwithin{equation}{section}
\title{Closed graph property in Alexandroff spaces}
\author[F. Ayatollah Zadeh Shirazi, S. Moradi Chaleshtori]{Fatemah Ayatollah Zadeh Shirazi, Sajjad Moradi Chaleshtori}
\begin{document}
\begin{abstract}
In the following text we show if $X$ is an Alexandroff space, then $f:X\to Y$
has closed graph if and only if it has constant closed value on each connected component of $X$.
Moreover, if $X$ an Alexandroff space and $f:X\to Y$
has closed graph, then $f:X\to Y$ is continuous. As a matter of fact, the
number of maps which have closed graph from Alexandroff space $X$ to a topological  space $Y$
depends just on the the number of connected components of $X$ and the number of closed points of $Y$.
\end{abstract}
\maketitle
\noindent {\small {\bf 2020 Mathematics Subject Classification:}  54C10, 54C35   \\
{\bf Keywords:}} Alexandroff space, closed graph, connected component
\section{Introduction}
\noindent For arbitrary map $f:X\to Y$, $G_f:=\{(x,f(x)):x\in X\}$ denotes the graph of 
$f:X\to Y$  \cite{bana}. For topological spaces $X,Y$ we say $f:X\to Y$ has closed graph 
if $G_f$ is a closed subset of $X\times Y$. Studying maps with closed graph is the
main idea of many texts in different categories, in topological groups \cite{gr, husain},
topological linear spaces \cite{tl}, more general spaces like Hausdorff spaces \cite{T2}. 
In this text we study self--maps on Alexandroff spaces with closed graphs.
Alexandroff spaces are topological spaces with  an extra property, namely, 
the intersection of every nonempty family of open sets is open. These spaces called 
Alexandroff spaces in the honour of  
\linebreak
P. Alexandroff's famous paper on~1937~\cite{alex}. 
\subsection*{More details on Alexandroff spaces}
As a matter of fact a topological space $X$ is an Alexandroff space if and only if each $ x\in X $ has the smallest open neighbourhood. 
In Alexandroff space $X$ let's denote the smallest open neighbourhood of $a\in X$ by  $V_a$. 
\\
In Alexandroff space $X$, $f : X\to \mathcal{P}(X)$ with
$f(x)=V_x$ for $x\in X$ (where $\mathcal{P}(X)$ denotes the collection of subsets of $X$)
satisfies the following properties:
\begin{enumerate}
\item $\forall x \in X \:(x \in f(x))$
\item $\forall x \in X \: \forall y \in f(x) \:(f(y) \subseteq f(x))$.
\end{enumerate}
Conversely, if there exists a function $f : X\to \mathcal{P}(X)$ which satisfies the above two
conditions, then $f(X)$ is a base for an Alexandroff topology on X.
\\
For $f:X\to Y$ and $A\subseteq X$, $f\restriction_A:A\to Y$ denotes the restriction of
$f$ to $A$. 
\section{Connected components of Alexandroff spaces}
\noindent In Alexandroff space $X$ consider the equivalence relation 
$\Re_X:=\{(x,y)\in X\times X:$ there exist $x=x_1,x_2,\ldots,y=x_n\in X$ such that
$V_{x_i} \cap V_{x_{i+1}} \neq\emptyset $ for all 
$i \in \{1,2,..., n-1\}\:\}$ (see \cite[Lemma 2.1]{attar} too). Also let's recall that
for each  $x \in X$,  $[x]_{\Re_X}:=\{ y\in X: (x,y)\in\Re_X  \}$ is
the equivalence class of $x$. 
In this section we prove that connected components of Alexandroff space $X$ are just
equivalence classes of $\Re_X$.
\begin{lemma}\label{vx}
In Alexandroff space $X$ for each $x\in X$, $[x]_{\Re_X}$ is open and a subset of connected 
component of $X$ containing $x$.
\end{lemma}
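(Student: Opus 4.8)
The plan is to establish the two claims in turn. For openness, I would show that $V_y\subseteq[x]_{\Re_X}$ for every $y\in[x]_{\Re_X}$; since $y\in V_y$ this immediately gives $[x]_{\Re_X}=\bigcup_{y\in[x]_{\Re_X}}V_y$, a union of open sets, hence open. To see the inclusion, let $z\in V_y$. Because $z\in V_z$ always holds, we have $z\in V_y\cap V_z$, so $V_y\cap V_z\neq\emptyset$, and the two--term chain $y,z$ witnesses $(y,z)\in\Re_X$. As $\Re_X$ is an equivalence relation and $y\in[x]_{\Re_X}$, transitivity yields $z\in[y]_{\Re_X}=[x]_{\Re_X}$, as required.

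For the second claim, the first step is the auxiliary observation that in an Alexandroff space every set of the form $V_a$ is connected: if $V_a=P\cup Q$ with $P,Q$ disjoint and relatively open in $V_a$ — hence open in $X$, since $V_a$ itself is open — and if, say, $a\in P$, then minimality of $V_a$ forces $V_a\subseteq P$, so $Q=\emptyset$. Now fix $y\in[x]_{\Re_X}$ and choose a chain $x=x_1,x_2,\ldots,x_n=y$ with $V_{x_i}\cap V_{x_{i+1}}\neq\emptyset$ for $1\le i\le n-1$. Each $V_{x_i}$ is connected and consecutive members of the list meet, so by the standard fact that a finite chain of connected sets with successive terms intersecting has connected union, $\bigcup_{i=1}^{n}V_{x_i}$ is connected. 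This connected set contains $x=x_1$ and $y=x_n$, hence is contained in the connected component of $X$ that contains $x$; in particular $y$ lies in that component. Since $y\in[x]_{\Re_X}$ was arbitrary, $[x]_{\Re_X}$ is a subset of the connected component of $x$.

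I do not expect a genuine obstacle here: the argument is elementary. The only points that need a little care are the identification of ``open in $V_a$'' with ``open in $X$'' (valid precisely because $V_a$ is open, a feature special to Alexandroff spaces), and invoking the chain--of--connected--sets lemma for the finite union $\bigcup_i V_{x_i}$ rather than trying to prove $[x]_{\Re_X}$ itself connected directly — the latter is also true and provable by essentially the same bookkeeping, but the union formulation avoids having to check that every vertex $x_i$ of the chain already lies in $[x]_{\Re_X}$.
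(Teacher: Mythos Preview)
Your proof is correct and follows essentially the same route as the paper: you show $[x]_{\Re_X}=\bigcup_{y\in[x]_{\Re_X}}V_y$ for openness, verify connectedness of each $V_a$ via minimality, and then push connectedness along a finite chain $V_{x_1},\ldots,V_{x_n}$ to land inside the component $C_x$. Your openness step is actually slightly more explicit than the paper's (you spell out why $V_y\subseteq[x]_{\Re_X}$ via the two-term chain $y,z$), but the structure and key ideas coincide.
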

\begin{proof}
Choose arbitrary $x\in X$. Using the definition of $\Re_X$, it's evident that 
$ V_x \subseteq [x]_{\Re_X}$, which leads us to
$[x]_{\Re_X}= \bigcup\{V_z:z \in  [x]_{\Re_X}\}$, in particular 
$[x]_{\Re_X}$ is open.
\\
Note that $V_x$ is connected, otherwise there exist disjoint nonempty open subsets $U,V$
of $V_x$ (hence disjoint nonempty open subsets of $X$) such that $x\in V_x=U\cup V$,
we may suppose $x\in U$ which is a contradiction (since $V_x$ is the smallest
open neighbourhood of $x$ and $U$ is a proper open subset of $V_x$). 
\\
Suppose $C_x$ is the connected component of $X$ which
contains $x$. 
Now using the definition of $\Re_X$ if $y\in[x]_{\Re_X}$, then
there exist $x=x_1,\ldots,x_n=y\in X$ such that $V_{x_i}\cap V_{x_{i+1}}\neq\varnothing$
for each $i\in\{1,\ldots,n-1\}$. Using the fact that union of two connected set with 
nonempty intersection is connected too, we have the following chain of connected subsets:
\[V_{x_1}\subseteq V_{x_1}\cap V_{x_2}\subseteq\cdots\subseteq V_{x_1}\cap V_{x_2}\cap\cdots\cap V_{x_n}\:.\]
Hence $C=V_{x_1}\cap V_{x_2}\cap\cdots\cap V_{x_n}$ is a connected subset of $X$ containing
$x=x_1,y=x_n$, therefore $y\in C\subseteq C_x$ which leads to $[x]_{\Re_X}\subseteq C_x$.
\end{proof}
\noindent The following theorem shows $[x]_{\Re_X}$ is just
the connected component of $X$ containing $x$, for each $x$ in Alexandroff space $X$. Also
equivalence classes of $X$ with respect to $\Re_X$ are exactly its connected components.
\begin{theorem}\label{connected component}
In Alexandroff space $X$ for each $x\in X$, the connected component of $X$ which contains $x$ is $[x]_{\Re_X}$.
\end{theorem}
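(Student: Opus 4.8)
The plan is to show the reverse inclusion $C_x \subseteq [x]_{\Re_X}$, since Lemma \ref{vx} already gives $[x]_{\Re_X} \subseteq C_x$, where $C_x$ denotes the connected component of $X$ containing $x$. The cleanest route is to observe that $[x]_{\Re_X}$ is simultaneously open and closed in $X$: once we know this, the fact that $[x]_{\Re_X}$ is a clopen connected set containing $x$ forces it to equal $C_x$, because $C_x$ is the maximal connected set containing $x$ and cannot meet both a nonempty clopen set and its nonempty clopen complement.

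First I would recall from Lemma \ref{vx} that $[x]_{\Re_X}$ is open. For closedness, I would use the standard fact that the equivalence classes of $\Re_X$ partition $X$, so the complement $X \setminus [x]_{\Re_X}$ is the union of all the other equivalence classes $[y]_{\Re_X}$ with $y \notin [x]_{\Re_X}$; by Lemma \ref{vx} each such class is open, hence their union is open, and therefore $[x]_{\Re_X}$ is closed. Thus $[x]_{\Re_X}$ is a nonempty clopen subset of $X$.

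Next I would combine clopenness with connectedness. Since $[x]_{\Re_X}$ is connected (again by Lemma \ref{vx}, which shows each equivalence class is connected) and contained in $C_x$, and since $[x]_{\Re_X}$ is clopen in $X$, its trace on $C_x$ is a clopen subset of the connected space $C_x$ containing the point $x$; hence this trace is all of $C_x$, i.e. $C_x \subseteq [x]_{\Re_X}$. Together with Lemma \ref{vx} this yields $C_x = [x]_{\Re_X}$, and the final sentence about equivalence classes being exactly the connected components follows immediately since the $[x]_{\Re_X}$ range over all connected components as $x$ ranges over $X$.

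I do not anticipate a serious obstacle here; the only point requiring a little care is verifying that $[x]_{\Re_X}$ is connected, but this is exactly what the chain-of-intersections argument in the proof of Lemma \ref{vx} establishes (each equivalence class is a union of connected pieces glued along nonempty intersections, or alternatively one invokes that it lies inside $C_x$ and is itself connected as shown there). The main idea to highlight is the clopen-partition observation, which is what upgrades the inclusion $[x]_{\Re_X} \subseteq C_x$ to an equality.
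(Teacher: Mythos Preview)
Your proposal is correct and follows essentially the same route as the paper: both arguments use Lemma~\ref{vx} for the inclusion $[x]_{\Re_X}\subseteq C_x$, observe that $X\setminus[x]_{\Re_X}$ is open as a union of the remaining equivalence classes, and then use connectedness of $C_x$ to force $C_x\subseteq[x]_{\Re_X}$. The only cosmetic difference is that you phrase the key step as ``$[x]_{\Re_X}$ is clopen, hence its trace on $C_x$ is all of $C_x$,'' whereas the paper phrases it as ``$[x]_{\Re_X}$ and its complement separate $X$, so $C_x$ lies in one piece''; also note that the connectedness of $[x]_{\Re_X}$ you invoke is not actually needed for this step (and is not explicitly stated in Lemma~\ref{vx}), so you can safely drop that clause.
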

\begin{proof}
Suppose that $C_x$ is the connected component of $X$ which contains $x$. 
 If $[x]_{\Re_X}=X$, then $[x]_{\Re_X}=X=C_x$, otherwise by Lemma~\ref{vx}, $[x]_{\Re_X}$ 
 and 
 \linebreak $\bigcup\{[y]_{\Re_X}:y\in X\setminus[x]_{\Re_X}\}=X\setminus[x]_{\Re_X}$ are two disjoint open sets, hence
 $[x]_{\Re_X}$, 
 \linebreak $X\setminus [x]_{\Re_X}$
 is a separation 
 of $X$, thus either $C_x\subseteq [x]_{\Re_X}$ or $C_x\subseteq X\setminus[x]_{\Re_X}$, considering
$x\in C_x\cap [x]_{\Re_X}$ leads to $C_x\cap [x]_{\Re_X}\neq\varnothing$ 
and $C_x\subseteq [x]_{\Re_X}$. By Lemma~\ref{vx}, $[x]_{\Re_X}\subseteq C_x$ which completes the proof.
\end{proof}
\section{Maps between Alexandroff spaces and closed graph property}
\noindent In this section
we show if $X$ is an Alexandroff space and $f:X\to Y$ has closed graph, then $f:X\to Y$ is continuous. Moreover, in Alexandroff space $X$ the number of maps from $X$ to $Y$ which have closed graph just depends on the number of
connected components of $X$ and the number of closed points of $Y$. 
As a matter of fact, for Alexandroff space $X$ a map 
$f:X\to Y$ has closed graph if and only if $f$ maps each connected
component of $X$ to a closed point of $Y$.
\begin{remark}\label{salam10}
In topological spaces $X,Y$ suppose $x\in X$ and $f:X\to Y$ has closed graph then for
each $x\in X$ we have 
\begin{itemize}
\item[(1)] $\{x\}\times \overline{\{f(x)\}}\subseteq \overline{\{(x,f(x))\}}\subseteq \overline{G_f}=G_f$, thus $ \overline{\{f(x)\}}=\{f(x)\}$ and $f(x)$ is a closed point of $Y$,
\item[(2)] $\overline{\{x\}}\times\{f(x)\}\subseteq \overline{\{(x,f(x))\}}\subseteq \overline{G_f}=G_f$, thus $f\restriction_{\overline{\{x\}}}:\overline{\{x\}}\to Y$ is the constant map with value $f(x)$ \cite[Lemma 2.2]{attar}, 
\item[(3)] if $z\in\bigcap\{V:V$ is an open neighbourhood of $x\}=:W$, then $x\in \overline{\{z\}}$, thus $f(x)=f(z)$ by item(2) hence $f\restriction_{W}:W\to Y$ is the constant map with value $f(x)$ too,
\item[(4)] in particular if $X$ is an Alexandroff space, then in item (3), $W=V_x$ and 
$f\restriction_{\overline{\{x\}}\cup V_x}:\overline{\{x\}}\cup V_x\to Y$ is the constant map with value $f(x)$.
\end{itemize}
\end{remark}
\begin{theorem}\label{salam15}
Let $X$ be an Alexandroff space and $f: X \rightarrow Y$ has closed graph, then $f: X \rightarrow Y$ is continuous.
\end{theorem}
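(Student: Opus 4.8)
The plan is to check continuity of $f$ pointwise, using the characteristic feature of an Alexandroff space that every $x\in X$ has a smallest open neighbourhood $V_x$; this reduces continuity of $f$ at $x$ to understanding $f$ on $V_x$ alone. First I would fix $x\in X$ together with an arbitrary open set $U\subseteq Y$ containing $f(x)$, and show $V_x\subseteq f^{-1}(U)$. The decisive ingredient is Remark~\ref{salam10}(4): because $f$ has closed graph and $X$ is Alexandroff, $f\restriction_{V_x}$ is the constant map with value $f(x)$, whence $f(V_x)=\{f(x)\}\subseteq U$ and therefore $V_x\subseteq f^{-1}(U)$. Thus $f^{-1}(U)$ contains an open neighbourhood of each of its points, so it is open, and $f$ is continuous.

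Equivalently, and perhaps more cleanly, one can argue globally: for any open $U\subseteq Y$ one has $f^{-1}(U)=\bigcup\{V_x:x\in f^{-1}(U)\}$. The inclusion $\supseteq$ is the only nontrivial part, and it holds precisely because $f(V_x)=\{f(x)\}\subseteq U$ for each $x\in f^{-1}(U)$ by Remark~\ref{salam10}(4); being a union of open sets, $f^{-1}(U)$ is then open.

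I do not anticipate a genuine obstacle in this theorem: the real work has already been carried out in Remark~\ref{salam10}, where the closed-graph hypothesis was translated --- via the observations that $z\in V_x$ forces $x\in\overline{\{z\}}$ and that $f$ is constant on $\overline{\{x\}}$ --- into the assertion that $f$ is locally constant on the minimal neighbourhoods $V_x$. The only point deserving a word of care is that in an Alexandroff space $\bigcap\{V:V\text{ open},\,x\in V\}$ coincides with $V_x$, which is exactly the defining property (arbitrary intersections of open sets are open) and is what lets Remark~\ref{salam10}(3)--(4) be applied with $W=V_x$.
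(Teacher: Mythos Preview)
Your proposal is correct and mirrors the paper's own argument almost verbatim: the paper invokes Remark~\ref{salam10}(4) to get $V_x\subseteq f^{-1}(f(x))$ and then writes $f^{-1}(D)=\bigcup\{V_x:f(x)\in D\}$ as a union of open sets, which is exactly your ``global'' formulation. The only cosmetic difference is that the paper lets $D\subseteq Y$ be arbitrary (so in fact every preimage is open), whereas you restrict to open $U$; either version yields continuity immediately.
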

\begin{proof}
By item (4) in Remark \ref{salam10} for each $x\in X$, $V_x\subseteq f^{-1}(f(x))$ thus
for each $D\subseteq Y$,
$f^{-1}(D)=\bigcup\{V_x:f(x)\in D\}$ is an open subset of $X$.
\end{proof}
\begin{lemma}\label{consclosgph}
Let $X$ be an Alexandroff space, $x\in X$ and $f:X\rightarrow Y$ has closed graph, then $f\restriction_{[x]_\Re}:[x]_\Re\to Y$ is a constant map.
\end{lemma}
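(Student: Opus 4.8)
The plan is to exploit the chain description of the equivalence relation $\Re_X$ together with item~(4) of Remark~\ref{salam10}. The one fact I would isolate first is the following: if $a,b\in X$ satisfy $V_a\cap V_b\neq\varnothing$, then $f(a)=f(b)$. Indeed, pick any $z\in V_a\cap V_b$. By Remark~\ref{salam10}(4) the restriction $f\restriction_{\overline{\{a\}}\cup V_a}$ is the constant map with value $f(a)$, and since $z\in V_a\subseteq\overline{\{a\}}\cup V_a$ we get $f(z)=f(a)$; applying the same remark to $b$ gives $f(z)=f(b)$, hence $f(a)=f(b)$.

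Next I would take an arbitrary $y\in[x]_\Re$ and unwind the definition of $\Re_X$: there exist $x=x_1,x_2,\ldots,x_n=y$ in $X$ with $V_{x_i}\cap V_{x_{i+1}}\neq\varnothing$ for every $i\in\{1,\ldots,n-1\}$. Applying the isolated fact to each consecutive pair $(x_i,x_{i+1})$ yields $f(x_i)=f(x_{i+1})$ for all such $i$, and a trivial finite induction along the chain then gives $f(x)=f(x_1)=f(x_2)=\cdots=f(x_n)=f(y)$. Since $y\in[x]_\Re$ was arbitrary, $f$ takes the single value $f(x)$ on all of $[x]_\Re$, i.e.\ $f\restriction_{[x]_\Re}$ is constant.

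I do not anticipate a genuine obstacle here; the content of the lemma is essentially packaged into Remark~\ref{salam10}(4), and the only thing to do is to transport the ``locally constant on each $V_a$'' information across the graph-connectivity chain that defines $[x]_\Re$. If one wanted to phrase it even more slickly, one could instead note that Theorem~\ref{salam15} makes $f$ continuous, that $[x]_\Re$ is connected by Theorem~\ref{connected component}, and that $f(x)$ is a closed (hence, in the relevant sense, ``isolated as a value'') point of $Y$ by Remark~\ref{salam10}(1); but the direct chain argument above is shorter and self-contained, so that is the route I would actually write out.
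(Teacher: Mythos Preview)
Your proposal is correct and follows essentially the same route as the paper: invoke Remark~\ref{salam10}(4) to see that $f$ is constant on each $V_z$, then run the chain argument from the definition of $\Re_X$ to conclude $f(x_1)=\cdots=f(x_n)$. The only difference is that you spell out the intermediate observation ``$V_a\cap V_b\neq\varnothing\Rightarrow f(a)=f(b)$'' explicitly via a common point $z$, whereas the paper leaves that step implicit.
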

\begin{proof}
By Remark~\ref{salam10}, for each $z\in X$, 
$f \restriction_{V_z}:V_z\to Y$ is constant. If $y\in[x]_{\Re_X}$, then there exist 
$x=x_1,\ldots, x_n=y$ in $X$ such that $V_{x_i}\cap V_{x_{i+1}} \neq \emptyset$
for each $i=1, \ldots, n-1$. So, $f(x)=f(x_1)= f(x_2)= \cdots= f(x_n)=f(y)$.  
Therefore $f\restriction_{[x]_\Re}:[x]_\Re\to Y$ is a constant map.
\end{proof}
\begin{lemma}\label{salam30}
Let $X$ be   an Alexandroff space. Then $f: X \rightarrow Y$ has closed graph if and only if for every connected component $C$ of $X$, $f(C)$ is a singleton closed subset  of $Y$.
\end{lemma}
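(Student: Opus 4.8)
The plan is to prove both directions using the machinery already developed, especially Lemma~\ref{consclosgph}, Theorem~\ref{connected component}, and item~(1) of Remark~\ref{salam10}.

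For the forward direction, suppose $f:X\to Y$ has closed graph and let $C$ be a connected component of $X$. By Theorem~\ref{connected component}, $C=[x]_{\Re_X}$ for any $x\in C$. Lemma~\ref{consclosgph} then tells us $f\restriction_C$ is constant, say with value $y_0=f(x)$, so $f(C)=\{y_0\}$ is a singleton. That this singleton is closed in $Y$ is exactly item~(1) of Remark~\ref{salam10}, which gives $\overline{\{f(x)\}}=\{f(x)\}$.

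For the converse, suppose that for every connected component $C$ of $X$, the image $f(C)$ is a singleton closed subset of $Y$; I must show $G_f$ is closed in $X\times Y$. Take $(a,b)\in X\times Y$ with $(a,b)\notin G_f$, i.e.\ $b\neq f(a)$; I will produce an open neighbourhood of $(a,b)$ missing $G_f$. Let $C$ be the connected component of $X$ containing $a$; by hypothesis $f$ is constant on $C$ with closed value $f(a)$, and by Theorem~\ref{connected component} together with Lemma~\ref{vx}, $C=[a]_{\Re_X}$ is open in $X$. Since $\{f(a)\}$ is closed, $Y\setminus\{f(a)\}$ is an open neighbourhood of $b$. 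Then $C\times(Y\setminus\{f(a)\})$ is an open neighbourhood of $(a,b)$: indeed for any $x\in C$ we have $f(x)=f(a)$, so no point $(x,f(x))$ with $x\in C$ lies in $C\times(Y\setminus\{f(a)\})$, and points $(x,f(x))$ with $x\notin C$ are excluded by the first coordinate. Hence this neighbourhood is disjoint from $G_f$, so $(X\times Y)\setminus G_f$ is open and $G_f$ is closed.

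The only subtle point is the openness of connected components in an Alexandroff space, which is precisely what Lemma~\ref{vx} and Theorem~\ref{connected component} supply; without it the converse argument would not yield an open neighbourhood of $(a,b)$. Everything else is a routine assembly of the cited results, so I expect no real obstacle.
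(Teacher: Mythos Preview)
Your argument is correct. The forward direction is identical to the paper's. For the converse you diverge: the paper shows $\overline{G_f}\subseteq G_f$ by taking a net $\{(x_\alpha,f(x_\alpha))\}$ converging to $(z,w)$, using that eventually $x_\alpha\in V_z$ and that $f$ is constant on $V_z$ (as $V_z$ lies in a connected component) to conclude $w\in\overline{\{f(z)\}}=\{f(z)\}$. You instead show $(X\times Y)\setminus G_f$ is open by exhibiting the basic open set $C\times(Y\setminus\{f(a)\})$ around an arbitrary point of the complement, using that the connected component $C$ itself is open. Your route is a bit more elementary---it avoids nets and makes the role of Lemma~\ref{vx} (openness of components) completely explicit---while the paper's net argument only needs the smaller open set $V_z$ rather than the full component. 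Both are short and essentially equivalent in difficulty.
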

\begin{proof}
Suppose $f: X \rightarrow Y$ has closed graph and $C$ is a connected component of $X$. By Theorem \ref{connected component}, for each $x\in C$, $C=[x]_{\Re_X}$. So Lemma \ref{consclosgph} shows that $f$ is constant on $C$, by Remark \ref{salam10}, $f(C)=\{f(x)\}$
is closed too. 
\\
Now, suppose for every connected component $C$ of $X$, $f(C)$ is a singleton closed subset of $Y$. If $(z,w)\in\overline{G_f}$, then there exists a net $\{x_\alpha\}_{\alpha\in\Gamma}$
in $X$ such that $\{(x_\alpha,f(x_\alpha)\}_{\alpha\in\Gamma}$ converges to $(z,w)$.
Hence  $\{x_\alpha\}_{\alpha\in\Gamma}$ converges to $z$ and there exists $\beta\in\Gamma$
such that $x_\alpha\in V_z$ for each $\alpha\geq\beta$. Since $V_z$ is a subset of
connected component containing $z$, $f$ is constant on $V_z$, so $f(x_\alpha)=f(z)$
for each $\alpha\geq\beta$. Since $\{f(x_\alpha)\}_{\alpha\in\Gamma}$ converges to
$w$, $\{f(z)\}_{\alpha\in\Gamma,\alpha\geq\beta}=\{f(x_\alpha)\}_{\alpha\in\Gamma,\alpha\geq\beta}$ converges to $w$ too, hence $w\in\overline{\{f(z)\}}=\{f(z)\}$. Therefore
$(z,w)=(z,f(z))\in G_f$ and $G_f$ is a closed subset of $X\times Y$.
\end{proof}
\begin{theorem}[main]\label{main}
Consider Alexandroff space $X$ and arbitrary map $f:X\to Y$. The following statements are equivalent:
\\
$\bullet$ $f:X\to Y$ has closed graph,
\\
$\bullet$ for every connected component $C$ of $X$, $f(C)$ is a singleton closed subset  of $Y$,
\\
$\bullet$ for every $x\in X$, $f([x]_{\Re_X})$ is a singleton closed subset  of $Y$.
\\
Moreover, in the above case $f:X\to Y$ is continuous.
\end{theorem}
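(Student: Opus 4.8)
The plan is to assemble the three results already established — Lemma~\ref{salam30}, Theorem~\ref{connected component}, and Theorem~\ref{salam15} — with no new argument required; the work is entirely in organizing the biconditionals and appending the continuity clause.

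First I would dispatch the equivalence of the first two bullets: this is verbatim the statement of Lemma~\ref{salam30}, so there is nothing to prove. Next, for the equivalence of the second and third bullets, I would invoke Theorem~\ref{connected component}. That theorem tells us that for each $x\in X$ the set $[x]_{\Re_X}$ is the connected component of $X$ containing $x$; conversely, given any connected component $C$ of $X$, picking any $x\in C$ and applying Theorem~\ref{connected component} yields $C=[x]_{\Re_X}$. Hence the collections $\{C:C\text{ a connected component of }X\}$ and $\{[x]_{\Re_X}:x\in X\}$ are literally the same family of subsets of $X$, so the two universally quantified conditions in the second and third bullets are the same assertion. Therefore all three bullets are equivalent.

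Finally, for the \emph{moreover} clause, I would simply note that if $f:X\to Y$ has closed graph — the first bullet, which in the stated situation holds — then Theorem~\ref{salam15} gives at once that $f:X\to Y$ is continuous.

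I do not anticipate any genuine obstacle here, since every ingredient has been proved above. The only point deserving a word of care — and it is a small one — is to record explicitly that \emph{every} connected component of an Alexandroff space has the form $[x]_{\Re_X}$, not merely that each $[x]_{\Re_X}$ is a component; but this follows immediately by applying Theorem~\ref{connected component} to an arbitrary point of the component in question.
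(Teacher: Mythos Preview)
Your proposal is correct and is exactly the paper's approach: the paper's proof consists of the single sentence ``Use Theorems~\ref{connected component}, \ref{salam15} and Lemma~\ref{salam30},'' and you have simply spelled out how those three ingredients combine. The care you take to note that every component arises as some $[x]_{\Re_X}$ is appropriate and follows immediately from Theorem~\ref{connected component}.
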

\begin{proof}
Use Theorems \ref{connected component}, \ref{salam15} and Lemma \ref{salam30}.
\end{proof}
\begin{corollary}
Suppose $X$ is an Alexandroff space and $Y$ is an arbitrary topological space, let:
\begin{itemize}
\item[] $\alpha:=card\{C:C$ is a connected component of $X\}(=card(\dfrac{X}{\Re_X}))$,
\item[] $\beta:=card\{y:y$ is a closed point of $Y\}$.
\end{itemize}
Then by Theorem \ref{main} we have (by $\mathcal{C}(X,Y)$ we mean the collection of continuous maps from $X$ to $Y$, also
$Y^X$ denotes the collection of all maps from $X$ to $Y$):
\begin{center}
$card\{ f\in Y^X:f$ has closed graph$\}=\beta^\alpha\leq card(\mathcal{C}(X,Y))$.
\end{center}
\end{corollary}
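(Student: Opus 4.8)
The plan is to set up an explicit bijection between the collection of closed-graph maps $X\to Y$ and the collection of all functions from the set $X/\Re_X$ of connected components of $X$ to the set of closed points of $Y$, to count both sides, and then to read off the inequality from the continuity clause of Theorem~\ref{main}.

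First I would invoke Theorem~\ref{main}: a map $f:X\to Y$ has closed graph if and only if $f([x]_{\Re_X})$ is a singleton closed subset of $Y$ for every $x\in X$, which by Theorem~\ref{connected component} is the same as saying that $f$ is constant on each connected component of $X$ and takes a closed point of $Y$ as its value there. Consequently such an $f$ is completely determined by the function $\widehat f$ that sends each connected component $C$ to the unique element of $f(C)$; this $\widehat f$ is a function from $X/\Re_X$ into $\{y\in Y:\overline{\{y\}}=\{y\}\}$.

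Next I would check that $f\mapsto\widehat f$ is a bijection onto the set of all such functions. Injectivity is immediate: since $X$ is the disjoint union of its connected components, $f(x)=\widehat f([x]_{\Re_X})$ recovers $f$ from $\widehat f$. For surjectivity, given any function $g$ from $X/\Re_X$ to the set of closed points of $Y$, the rule $f(x):=g([x]_{\Re_X})$ defines a map with $f([x]_{\Re_X})=\{g([x]_{\Re_X})\}$ a singleton closed set, hence $f$ has closed graph by Theorem~\ref{main}, and visibly $\widehat f=g$. Counting the two sides yields $card\{f\in Y^X:f\text{ has closed graph}\}=\beta^\alpha$. The inequality $\beta^\alpha\le card(\mathcal{C}(X,Y))$ then follows from the ``moreover'' part of Theorem~\ref{main}: every closed-graph map is continuous, so $\{f\in Y^X:f\text{ has closed graph}\}\subseteq\mathcal{C}(X,Y)$, and its cardinality is at most $card(\mathcal{C}(X,Y))$.

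I do not expect any genuine obstacle here; the bijection is routine once Theorem~\ref{main} is in hand. The only points deserving a word of care are the degenerate cases, which I would dispatch with the usual cardinal conventions: if $Y$ has no closed point and $X\ne\varnothing$ then there are no closed-graph maps and $\beta^\alpha=0^\alpha=0$, while if $X=\varnothing$ then $\alpha=0$, the empty map is the unique (vacuously closed-graph) map $X\to Y$, and $\beta^0=1$. So the stated identity is to be read with $0^\kappa=0$ for $\kappa>0$ and $\kappa^0=1$.
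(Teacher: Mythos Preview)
Your argument is correct and is exactly the approach the paper intends: the paper gives no separate proof for this corollary beyond the phrase ``by Theorem~\ref{main},'' and your bijection between closed-graph maps and functions from $X/\Re_X$ to the closed points of $Y$, together with the continuity clause of Theorem~\ref{main} for the inequality, is precisely the unpacking of that citation. Your handling of the degenerate cases $\beta=0$ and $\alpha=0$ is a welcome clarification that the paper leaves implicit.
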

\noindent Compare the following (counter)example by Theorem \ref{salam15}.
\begin{example}
Suppose $X$ is a connected Alexandroff space, then by Lemma \ref{salam30}, $f:X\to X$ has closed graph if and only if
it is a constant map with closed point value. 
Equip $\mathbb{Z}=\{0,\pm1,\pm2,\cdots\}$ with 
topology generated by the base consist of  sets of the form 
$\{ 2m+1 \}$ and $\{ 2m-1, 2m, 2m+1 \}$ in which $m$ varies arbitrarily in $\mathbb Z$, we call this space Khalimsky line and denote it by $\mathcal{K}$. For every $n\in \mathbb{N} $, $\mathcal{K}^n$ with product topology is  called 
$n$ dimensional Khalimsky space.
$\mathcal{K}^n$ is a connected Alexandroff space and its closed points are
$M=\{(2\lambda_1,\cdots,2\lambda_n):\lambda_1,\ldots,\lambda_n\in\mathbb{Z}\}$,
so $f:\mathcal{K}^n\to \mathcal{K}^n$ has closed graph if and only if there exist
integers $\lambda_1,\ldots,\lambda_n$ such that $f$ is the map with constant value
$(2\lambda_1,\cdots,2\lambda_n)$. So there are infinite countable self--map on $\mathcal{K}^n$
which have closed graph. However $g:\mathcal{K}^n\to \mathcal{K}^n$ with constant
value $(1,\cdots,1)$ is continuous and it does not have closed graph \cite{attar}.
\end{example}

\noindent {\small $\:$ \\
{\bf Fatemah Ayatollah Zadeh Shirazi}, Faculty
of Mathematics, Statistics and Computer Science, College of
Science, University of Tehran, Enghelab Ave., Tehran, Iran
\\
(f.a.z.shirazi@ut.ac.ir)}
\\ $\:$ \\
{\small {\bf Sajjad Moradi Chaleshtori}, Faculty
of Mathematics, Statistics and Computer Science, College of
Science, University of Tehran, Enghelab Ave., Tehran, Iran
\\
(smoradi.ch@ut.ac.ir)}

%
%

\end{document}